\newtheorem{theorem}{Theorem}[section]
\newtheorem{lemma}[theorem]{Lemma}
\newtheorem{definition}[theorem]{Definition}
\newtheorem{remark}[theorem]{Remark}
\def\to{\rightarrow}
\def\c{\mathcal}
\def\r{\mathrm}
\def\bb{\mathbb}
\begin{document}
\title[A modification of Hardy-Littlewood maximal-function]{A modification of Hardy-Littlewood maximal-function on Lie groups}
\author[M.M. Sadr]{Maysam Maysami Sadr}
\address{Institute for Advanced Studies in Basic Sciences, Zanjan, Iran}
\email{sadr@iasbs.ac.ir}
\begin{abstract}
For a real-valued function $f$ on a metric measure space $(X,d,\mu)$ the Hardy-Littlewood maximal-function of $f$ is given by the following 
`supremum-norm': $$Mf(x):=\sup_{r>0}\frac{1}{\mu(\c{B}_{x,r})}\int_{\c{B}_{x,r}}|f|d\mu.$$
In this note, we replace the supremum-norm on parameters $r$ by $\c{L}_p$-norm with weight $w$ on parameters $r$
and define Hardy-Littlewood integral-function $I_{p,w}f$. It is shown that $I_{p,w}f$ converges pointwise
to $Mf$ as $p\to\infty$. Boundedness of the sublinear operator $I_{p,w}$ and continuity of the function $I_{p,w}f$ 
in case that $X$ is a Lie group, $d$ is a left-invariant metric, and $\mu$ is a left Haar-measure (resp. right Haar-measure) are studied.\\
\textbf{MSC 2020.} 43Axx, 42B25, 43A15.\\
\textbf{Keywords.} Hardy-Littlewood maximal-function, Lie group, metric measure space.
\end{abstract}
\maketitle
\section{Introduction}\label{2212290825}
Maximal sublinear operators and their boundedness properties are one of the main tools in various aspects of Fourier Analysis
on Euclidean spaces $\bb{R}^n$ \cite{Stein1}. The prototype of these operators is Hardy-Littlewood maximal-function $M$ defined by
\begin{equation}\label{2301140934}Mf(x):=\sup_{r>0}\frac{1}{2r}\int_{x-r}^{x+r}|f(y)|dy\end{equation}
for any locally integrable function $f$ on $\bb{R}$. On other hand, there has been many attempts to extend various classical
results of Fourier Analysis for general metric measure spaces and in particular for Riemannian manifolds and Lie groups.
We only mention a few recent works with different flavors: \cite{Chousionis1,Forni1,Hormozi1,Ionescu1,Papageorgiou1,Santagati1,YangLi1}.
One of the problems concerning such extensions, is to define appropriate maximal
operators with good boundedness properties. In \cite{Sadr1} we considered an abstract and unified approach to
$(1,1)$-weak type boundedness of Hardy-Littlewood maximal-function operators.
The main idea of the present note is to replace `supremum' in the definitions of maximal operators by appropriate integrals on
parameter-spaces, in order to find some affable substitutes for maximal operators.
In this note we apply this idea to Hardy-Littlewood maximal-function operator on metric measure spaces. For instance,
our modified version of (\ref{2301140934}) becomes
$$I_{p,w}f(x):=\Bigg(\int_0^\infty\Big(\frac{1}{2r}\int_{x-r}^{x+r}|f(y)|dy\Big)^pw(r)dr\Bigg)^\frac{1}{p},$$
where $w$ is an integrable function of $r$ and $1\leq p<\infty$. We call $I_{p,w}f$ the Hardy-Littlewood integral-function.
In $\S$\ref{2212290826}, we give the definition of integral-function operators $I_{p,w}$ and prove that $\lim_{p\to\infty}I_{p,w}f(x)=Mf(x)$.
In $\S$\ref{2212290827} (resp. $\S$\ref{2212290828}), we prove that for any (non-compact) Lie group $G$ with a left-invariant metric
and a left-invariant measure (resp. right-invariant measure) $I_{p,w}$ is $(\c{L}_q(G),\c{L}_q(G))$-bounded for $1\leq p\leq q\leq\infty$
(and suitable $w$). We also show that $I_{p,w}f$ is almost every-where continuous for $f\in\c{L}_p(G)$ and $1\leq p<\infty$.
We hope to give some applications in a future work.

\textbf{Acknowledgement.} The author would like to express his sincere gratitude to Dr. Mahdi Hormozi for
valuable discussions on various aspects of Fourier Analysis.
\section{The main definition}\label{2212290826}
Let $X$ be a metric space with an unbounded distance function denoted by $d$. The open ball with center $x\in X$ and radius $r>0$ is denoted by
$\c{B}_{x,r}$. We have the following easy lemma.
\begin{lemma}\label{2212240755}
Let $\nu$ be a Borel measure on $X$ which is finite on bounded subsets.
Then the function $(x,r)\mapsto\nu(\c{B}_{x,r})$ from $X\times(0,\infty)$ into $[0,\infty)$
is lower semi-continuous and the function $r\mapsto\nu(\c{B}_{x,r})$ is left continuous.
If for every $x,r$ we have $\nu\{y:d(x,y)=r\}=0$ ($\text{e.g.}$ $X$ is a Riemannian manifold and $d,\nu$ are the canonical metric and
measure on $X$) then the function $(x,r)\mapsto\nu(\c{B}_{x,r})$ is continuous.\end{lemma}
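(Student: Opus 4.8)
The plan is to prove the three assertions—lower semicontinuity of $(x,r)\mapsto\nu(\c{B}_{x,r})$, left continuity of $r\mapsto\nu(\c{B}_{x,r})$, and continuity under the boundary-null hypothesis—by exploiting the interplay between the openness of balls and monotone convergence of the measure $\nu$. The key structural facts I would use are that open balls are open sets, that $\c{B}_{x,r}=\bigcup_{0<s<r}\c{B}_{x,s}$ is an increasing union as $s\uparrow r$, and that $\nu$ is finite on bounded sets so that continuity-from-below and continuity-from-above of the measure both apply.

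For lower semicontinuity, I would fix $(x_0,r_0)$ and a level $a<\nu(\c{B}_{x_0,r_0})$, and show that $\nu(\c{B}_{x,r})>a$ for all $(x,r)$ near $(x_0,r_0)$. The idea is to first shrink the radius: by continuity from below, choose $r_1<r_0$ with $\nu(\c{B}_{x_0,r_1})>a$. Then I would observe that for $(x,r)$ with $d(x,x_0)<\epsilon$ and $r>r_1+\epsilon$ the triangle inequality gives the inclusion $\c{B}_{x_0,r_1}\subseteq\c{B}_{x,r}$, so by monotonicity of $\nu$ we get $\nu(\c{B}_{x,r})\geq\nu(\c{B}_{x_0,r_1})>a$. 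This produces a neighborhood of $(x_0,r_0)$ on which the function exceeds $a$, which is exactly lower semicontinuity.

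Left continuity in $r$ is the most direct piece: since $\c{B}_{x,r}=\bigcup_{0<s<r}\c{B}_{x,s}$ with the union increasing as $s\uparrow r$, continuity from below of $\nu$ gives $\nu(\c{B}_{x,s})\to\nu(\c{B}_{x,r})$ as $s\uparrow r$, which is precisely left continuity. For the final continuity statement under the hypothesis $\nu\{y:d(x,y)=r\}=0$, I would combine the already-established lower semicontinuity with a matching upper semicontinuity argument. Upper semicontinuity would follow by comparing $\c{B}_{x,r}$ to a slightly larger \emph{closed} ball $\ov{\c{B}}_{x_0,r_0+\epsilon'}$: for $(x,r)$ close to $(x_0,r_0)$ the triangle inequality gives $\c{B}_{x,r}\subseteq\ov{\c{B}}_{x_0,r_2}$ for an appropriate $r_2$ slightly larger than $r_0$, and as the neighborhood shrinks these closed balls decrease to the closed ball $\ov{\c{B}}_{x_0,r_0}$, whose measure equals $\nu(\c{B}_{x_0,r_0})$ precisely because the sphere $\{y:d(x_0,y)=r_0\}$ is null. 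Continuity from above (valid here since $\nu$ is finite on bounded sets) then forces the limsup of $\nu(\c{B}_{x,r})$ to be at most $\nu(\c{B}_{x_0,r_0})$.

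The main obstacle I anticipate is the upper-semicontinuity half of the last claim, where care is needed to pass between open and closed balls and to set up a countable decreasing family of closed balls whose intersection is exactly $\ov{\c{B}}_{x_0,r_0}$, so that continuity from above applies cleanly; the null-sphere hypothesis is exactly what is needed to close the gap $\nu(\ov{\c{B}}_{x_0,r_0})=\nu(\c{B}_{x_0,r_0})$. Everything else reduces to monotonicity and the two monotone-convergence properties of $\nu$, so the bookkeeping with the triangle inequality and the choice of $\epsilon$'s and intermediate radii is the only genuinely delicate part.
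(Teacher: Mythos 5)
Your proposal is correct, but it takes a genuinely different route from the paper's proof. The paper argues sequentially via limsup sets: for $x_n\to x$, $r_n\to r$ it observes $\cap_n\cup_{k\geq n}(\c{B}_{x,r}\setminus\c{B}_{x_k,r_k})=\emptyset$, so continuity from above forces $\nu(\c{B}_{x,r}\setminus\c{B}_{x_n,r_n})\to0$, which gives lower semicontinuity; left continuity is then \emph{deduced} from lower semicontinuity together with monotonicity of $r\mapsto\nu(\c{B}_{x,r})$; and the final claim uses the same device with the balls swapped, the key inclusion being $\cap_n\cup_{k\geq n}(\c{B}_{x_k,r_k}\setminus\c{B}_{x,r})\subseteq\{y:d(x,y)=r\}$. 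You instead work with neighborhood inclusions: an inner approximation $\c{B}_{x_0,r_1}\subseteq\c{B}_{x,r}$ (with $r_1<r_0$ supplied by continuity from below) for lower semicontinuity, a direct continuity-from-below argument for left continuity, and an outer approximation by closed balls $\ov{\c{B}}_{x_0,r_0+2\epsilon}$ decreasing to $\{y:d(x_0,y)\le r_0\}$ for the upper-semicontinuity half, with the null-sphere hypothesis entering exactly as $\nu\{y:d(x_0,y)\le r_0\}=\nu(\c{B}_{x_0,r_0})$. Both proofs ultimately rest on the two monotone convergence properties of $\nu$, but yours buys two things: it isolates upper semicontinuity explicitly, making transparent where the hypothesis is used, and your lower-semicontinuity and left-continuity arguments use only continuity from below, so they do not need finiteness of $\nu$ on bounded sets (the paper's limsup-set argument for lower semicontinuity already invokes continuity from above, hence finiteness of $\nu(\c{B}_{x,r})$). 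What the paper's approach buys is uniformity: a single sequential template handles both halves without ever introducing closed balls or distinguishing $\{d\le r\}$ from the closure of the open ball. The bookkeeping you flagged is indeed routine: taking $\epsilon<(r_0-r_1)/2$ makes $d(x,x_0)<\epsilon$ and $|r-r_0|<\epsilon$ imply $r>r_1+\epsilon$, and taking $\epsilon_n=1/n$ gives the countable decreasing family of closed balls needed for continuity from above.
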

\begin{proof}Let $(x_n)_n$ and $(r_n)_n$ be sequences respectively in $X$ and $(0,\infty)$ such that $x_n\to x$ and $r_n\to r>0$.
We have $\cap_n\cup_{k\geq n}(\c{B}_{x,r}\setminus\c{B}_{x_k,r_k})=\emptyset$
and hence $\nu(\c{B}_{x,r}\setminus\c{B}_{x_n,r_n})\to0$. Thus if $\epsilon>0$ then for sufficiently
large $n$ we have $\nu(\c{B}_{x,r})-\epsilon<\nu(\c{B}_{x,r}\cap\c{B}_{x_n,r_n})\leq\nu(\c{B}_{x_n,r_n})$.
This shows the desired lower semi-continuity. Since $r\mapsto\nu(\c{B}_{x,r})$ is an increasing function, the lower semi-continuity
implies the desired left continuity. We have $$\cap_n\cup_{k\geq n}(\c{B}_{x_k,r_k}\setminus\c{B}_{x,r})\subseteq\{y:d(x,y)=r\}.$$
Thus if $\nu\{y:d(x,y)=r\}=0$ then $\nu(\c{B}_{x_n,r_n}\setminus\c{B}_{x,r})\to0$ and hence for sufficiently large $n$
we have $\nu(\c{B}_{x_n,r_n})-\epsilon<\nu(\c{B}_{x_n,r_n}\cap\c{B}_{x,r})\leq\nu(\c{B}_{x,r})$. \end{proof}
Let $\mu$ be a Borel measure on $X$ with $\mu(X)=\infty$ and such that for any nonempty bounded open subset $U$ of $X$, $0<\mu(U)<\infty$.
We denote by $\c{F}_{loc}(X)$ the set of measurable functions $f$ on $X$ such that $\int_U|f|d\mu<\infty$
for every bounded Borel subset $U$. For any $f\in\c{F}_{loc}(X)$ the averaging-function ${A}f$ of $f$ is defined by
$${A}f:X\times(0,\infty)\to[0,\infty),\quad{A}f(x,r):=\frac{1}{\mu(\c{B}_{x,r})}\int_{\c{B}_{x,r}}|f|d\mu.$$
By Lemma \ref{2212240755} the functions $(x,r)\mapsto\int_{\c{B}_{x,r}}|f|d\mu$ and $(x,r)\mapsto\frac{1}{\mu(\c{B}_{x,r})}$ are measurable.
Thus ${A}f$ is measurable. The Hardy-Littlewood maximal-function ${M}f$ of $f$ is a measurable function on $X$ defined by
$${M}f:X\to[0,\infty],\quad{M}f(x):=\sup_{r>0}{A}f(x,r).$$
Thus ${M}f(x)$ is just the supremum-norm of the function $r\mapsto{A}f(x,r)$.
Our main idea is to replace the supremum-norm by an $\c{L}_p$-norm:
\begin{definition}Let $X,d,\mu$ be as above. Let $w$ denote a nonnegative measurable function on $(0,\infty)$ with
$\|w\|:=\int_0^\infty w(r)dr<\infty$ and such that $w$ is also almost everywhere nonzero. We call $w$ a radius-weight.
Denote by $\hat{w}$ the finite measure on $(0,\infty)$ with density $w$. For any $f\in\c{F}_{loc}(X)$ the Hardy-Littlewood integral-function
$I_{p,w}f$ of type $(p,w)$,  $1\leq p\leq\infty$, is defined to be the measurable function on $X$ given by
$$I_{p,w}f:X\to[0,\infty],\quad I_{p,w}f(x):=\|r\mapsto{A}f(x,r)\|_{\c{L}_p(\hat{w})}.$$\end{definition}
More explicitly, for $1\leq p<\infty$ we have $$I_{p,w}f(x):=\Big(\int_0^\infty w(r)(Af(x,r))^pdr\Big)^{1/p}.$$
By Lemma \ref{2212240755}, $r\mapsto Af(x,r)$ is left continuous. Thus for $p=\infty$ we have $$I_{\infty,w}f(x)=Mf(x).$$
Since $A$ is sublinear, $I_{p,w}$ is sublinear on $\c{F}_{loc}(X)$.
\begin{lemma}\label{2212260802}
Let $\theta$ be a finite measure on a measurable space $T$ and let $\phi:T\to[0,\infty)$ be measurable. Then
$\|\phi\|_{\c{L}_\infty(\theta)}=\lim_{p\to\infty}\|\phi\|_{\c{L}_p(\theta)}$.\end{lemma}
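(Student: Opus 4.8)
The plan is to set $M:=\|\phi\|_{\c{L}_\infty(\theta)}\in[0,\infty]$ and establish the claim by a two-sided squeeze, proving $\limsup_{p\to\infty}\|\phi\|_{\c{L}_p(\theta)}\leq M$ and $\liminf_{p\to\infty}\|\phi\|_{\c{L}_p(\theta)}\geq M$ separately. The single numerical fact driving both inequalities is that, because $\theta$ is \emph{finite}, one has $\theta(A)^{1/p}\to1$ as $p\to\infty$ for every measurable $A$ with $0<\theta(A)<\infty$; this is precisely where finiteness of $\theta$ is indispensable (the statement is false for infinite measures).

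For the upper bound I would first dispose of the case $M=\infty$, which makes the inequality vacuous. When $M<\infty$, the essential-supremum bound $\phi\leq M$ $\theta$-almost everywhere gives $\int_T\phi^pd\theta\leq M^p\theta(T)$, hence $\|\phi\|_{\c{L}_p(\theta)}\leq M\,\theta(T)^{1/p}$; letting $p\to\infty$ and using $\theta(T)^{1/p}\to1$ yields $\limsup_{p\to\infty}\|\phi\|_{\c{L}_p(\theta)}\leq M$. The degenerate subcases $M=0$ (where $\phi=0$ almost everywhere) and $\theta(T)=0$ are immediate.

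For the lower bound, assume $M>0$ (otherwise the inequality is trivial) and fix any $c$ with $0\leq c<M$. By the definition of the essential supremum the level set $E:=\{t\in T:\phi(t)>c\}$ satisfies $\theta(E)>0$, while finiteness of $\theta$ forces $\theta(E)<\infty$. Restricting the integral to $E$ gives $\|\phi\|_{\c{L}_p(\theta)}\geq\big(\int_E\phi^pd\theta\big)^{1/p}\geq c\,\theta(E)^{1/p}$, so $\liminf_{p\to\infty}\|\phi\|_{\c{L}_p(\theta)}\geq c$. Taking the supremum over all such $c$ produces $\liminf_{p\to\infty}\|\phi\|_{\c{L}_p(\theta)}\geq M$ when $M<\infty$, and the same argument with $c\to\infty$ covers $M=\infty$. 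Combining the two bounds finishes the proof.

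I expect the only genuinely delicate points to be bookkeeping rather than conceptual: one must cleanly separate the cases $M=\infty$, $M=0$, and $0<M<\infty$, and verify in each that the auxiliary limit $\theta(A)^{1/p}\to1$ is being applied to a set of strictly positive \emph{finite} measure. The least routine branch is the $M=\infty$ case of the lower bound, where the fixed threshold $c$ is replaced by arbitrarily large levels, relying on $\theta\{\phi>c\}>0$ for every $c>0$.
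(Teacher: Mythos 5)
Your proof is correct and takes essentially the same approach as the paper's: the upper bound from $\phi\le M$ almost everywhere together with $\theta(T)^{1/p}\to 1$, and the lower bound from the positive-measure level sets $\{\phi>c\}$ with $\theta(E)^{1/p}\to 1$. The only cosmetic difference is in the case $\|\phi\|_{\mathcal{L}_\infty(\theta)}=\infty$, which the paper reduces to the finite case via the truncations $\{\phi\le n\}$, while you run the level-set argument directly with arbitrarily large thresholds $c$ --- an equivalent mechanism.
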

\begin{proof} We denote $\|\cdot\|_{\c{L}_p(\theta)}$ by $\|\cdot\|_p$. Suppose $\|\phi\|_\infty<\infty$. Without lost of generality
assume $\|\phi\|_\infty=1$ and $\theta(T)=1$. We have $\limsup_p\|\phi\|_p\leq1$. For $\epsilon>0$ let $S_\epsilon:=\{t:\phi(t)>1-\epsilon\}$.
Then $(1-\epsilon)\theta(S_\epsilon)^\frac{1}{p}\leq\|\phi\|_p$. Since $\theta(S_\epsilon)\neq0$ we have $(1-\epsilon)\leq\liminf_p\|\phi\|_p$,
and hence $1\leq\liminf_p\|\phi\|_p$. Thus $\|\phi\|_\infty=\lim_p\|\phi\|_p$. Now suppose $\|\phi\|_\infty=\infty$. Let $S'_n:=\{t:\phi(t)\leq n\}$.
By the first part of the proof we have $\|\phi|_{S'_n}\|_\infty=\lim_p\|\phi|_{S'_n}\|_p$. Thus $\|\phi|_{S'_n}\|_\infty\leq\liminf_p\|\phi\|_p$.
Since $\sup_n\|\phi|_{S'_n}\|_\infty=\infty$ we have $\lim_p\|\phi\|_p=\infty$.\end{proof}
\begin{theorem}\label{2301080828}For any $f\in\c{F}_{loc}(X)$ and every $x\in X$ we have
$$\lim_{p\to\infty}I_{p,w}f(x)=Mf(x).$$\end{theorem}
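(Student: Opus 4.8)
The plan is to read the statement off directly from Lemma \ref{2212260802}. Fix $x\in X$, set $T:=(0,\infty)$ with the finite measure $\theta:=\hat{w}$ (finite because $\|w\|<\infty$), and let $\phi(r):=Af(x,r)$. Since $f\in\c{F}_{loc}(X)$ and each ball $\c{B}_{x,r}$ is a nonempty bounded open set, we have $0<\mu(\c{B}_{x,r})<\infty$ and $\int_{\c{B}_{x,r}}|f|d\mu<\infty$, so $\phi$ is finite, i.e.\ takes values in $[0,\infty)$; and by Lemma \ref{2212240755} the section $r\mapsto Af(x,r)$ is measurable. Hence $\phi$ meets the hypotheses of Lemma \ref{2212260802}, which at once yields
$$\lim_{p\to\infty}I_{p,w}f(x)=\lim_{p\to\infty}\|\phi\|_{\c{L}_p(\hat{w})}=\|\phi\|_{\c{L}_\infty(\hat{w})}=I_{\infty,w}f(x).$$

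It then remains to identify $I_{\infty,w}f(x)=\|\phi\|_{\c{L}_\infty(\hat{w})}$ with $Mf(x)=\sup_{r>0}\phi(r)$, which is the equality already recorded before the theorem. I would argue it as follows. The $\c{L}_\infty(\hat{w})$-norm is the $\hat{w}$-essential supremum, and since $w$ is almost everywhere nonzero the measure $\hat{w}$ is mutually absolutely continuous with Lebesgue measure on $(0,\infty)$; thus the $\hat{w}$-essential supremum of $\phi$ equals its Lebesgue-essential supremum. The bound $\operatorname{ess\,sup}\phi\leq\sup\phi$ is automatic, so the work is in the reverse direction. Here the left continuity of $\phi$ from Lemma \ref{2212240755} is decisive: if some $r_0$ had $\phi(r_0)>\operatorname{ess\,sup}\phi=:E$, then left continuity would give $\delta>0$ with $\phi>E$ on $(r_0-\delta,r_0)$, a set of positive Lebesgue and hence positive $\hat{w}$ measure, contradicting the definition of $E$. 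Therefore $\sup\phi=\operatorname{ess\,sup}\phi$ and the theorem follows.

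Because the entire argument is a specialization of Lemma \ref{2212260802}, there is essentially no obstacle beyond that lemma itself. The one point that requires care is the passage from the $\c{L}_\infty(\hat{w})$-norm to the genuine supremum $Mf(x)$: both the a.e.-positivity of $w$ and the left continuity of $Af(x,\cdot)$ are used here, and neither can be dropped, since otherwise the essential supremum could fall strictly below $Mf(x)$ and the limit would miss its intended target.
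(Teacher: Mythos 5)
Your proposal is correct and follows the paper's own route exactly: the paper's proof is the one-line application of Lemma \ref{2212260802} with $T=(0,\infty)$, $\theta=\hat{w}$, $\phi=Af(x,\cdot)$, relying on the identity $I_{\infty,w}f(x)=Mf(x)$ stated just before the theorem. Your only addition is to spell out that identity (essential supremum equals supremum via the a.e.-positivity of $w$ and the left continuity of $Af(x,\cdot)$ from Lemma \ref{2212240755}), which is precisely the justification the paper leaves implicit.
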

\begin{proof}It follows
from Lemma \ref{2212260802}, with $T=(0,\infty)$, $\theta=\hat{w}$, $\phi=Af(x,\cdot)$.\end{proof}
\begin{theorem}\label{2301080828+1/2}For $q\in[1,\infty)$ the following statements are equivalent:
\begin{enumerate}\item[(i)] $M$ is  $(\c{L}_q(X),\c{L}_q(X))$-bounded.
\item[(ii)] The family $\{I_{p,w}\}_{1\leq p<\infty}$ is uniformly $(\c{L}_q(X),\c{L}_q(X))$-bounded.
\item[(iii)] There exists a sequence $(p_n)_n$ in $[1,\infty)$ such that $p_n\to\infty$ and such that the family
$\{I_{p_n,w}\}_{n}$ is uniformly $(\c{L}_q(X),\c{L}_q(X))$-bounded.\end{enumerate}\end{theorem}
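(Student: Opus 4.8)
The plan is to establish the cyclic chain of implications (i)$\Rightarrow$(ii)$\Rightarrow$(iii)$\Rightarrow$(i). The engine for the first implication is a pointwise domination of $I_{p,w}$ by $M$ that is uniform in $p$, while the engine for the last is the pointwise convergence $I_{p,w}f\to Mf$ already established in Theorem \ref{2301080828}, combined with Fatou's lemma. The middle implication is a triviality.

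For (i)$\Rightarrow$(ii), I would start from the fact that $Af(x,r)\leq Mf(x)$ for every $r>0$, which is immediate from the definition of $Mf$ as a supremum. Hence, for $1\leq p<\infty$,
\[I_{p,w}f(x)^p=\int_0^\infty w(r)(Af(x,r))^p\,dr\leq(Mf(x))^p\int_0^\infty w(r)\,dr=\|w\|\,(Mf(x))^p,\]
so that $I_{p,w}f(x)\leq\|w\|^{1/p}Mf(x)$ pointwise. Since $\|w\|^{1/p}\leq\max(1,\|w\|)$ for all $p\geq1$, taking $\c{L}_q$-norms and invoking the assumed bound $\|Mf\|_q\leq\|M\|\,\|f\|_q$ yields $\|I_{p,w}f\|_q\leq\max(1,\|w\|)\|M\|\,\|f\|_q$ with a constant independent of $p$, which is exactly the uniform boundedness asserted in (ii).

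The implication (ii)$\Rightarrow$(iii) requires nothing more than restricting the uniformly bounded family to any chosen sequence $p_n\to\infty$, say $p_n=n$. For (iii)$\Rightarrow$(i), let $C:=\sup_n\|I_{p_n,w}\|$ denote the common bound, so that $\|I_{p_n,w}f\|_q\leq C\|f\|_q$ for every $n$. Fixing $f\in\c{L}_q(X)$, Theorem \ref{2301080828} gives $I_{p_n,w}f(x)\to Mf(x)$ for every $x$, and by continuity of $t\mapsto t^q$ also $(I_{p_n,w}f(x))^q\to(Mf(x))^q$ pointwise. As all functions involved are nonnegative and measurable, Fatou's lemma yields
\[\int_X(Mf)^q\,d\mu\leq\liminf_n\int_X(I_{p_n,w}f)^q\,d\mu=\liminf_n\|I_{p_n,w}f\|_q^q\leq C^q\|f\|_q^q,\]
whence $\|Mf\|_q\leq C\|f\|_q$ and $M$ is $(\c{L}_q(X),\c{L}_q(X))$-bounded.

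I do not expect a serious obstacle here: the two nontrivial directions reduce, respectively, to the clean pointwise inequality $Af(x,r)\leq Mf(x)$ and to Fatou's lemma applied along the sequence supplied by (iii), with Theorem \ref{2301080828} furnishing the required pointwise limit. The only point demanding a little care is the uniformity of the constant in (i)$\Rightarrow$(ii), which is handled by replacing the $p$-dependent factor $\|w\|^{1/p}$ by its supremum $\max(1,\|w\|)$ over $p\in[1,\infty)$; note that the finiteness $\|w\|<\infty$, part of the definition of a radius-weight, is precisely what makes this factor bounded.
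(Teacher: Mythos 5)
Your proposal is correct and follows essentially the same route as the paper: (i)$\Rightarrow$(ii) via the pointwise bound $Af(x,r)\leq Mf(x)$ giving $I_{p,w}f\leq\|w\|^{1/p}Mf$, and (iii)$\Rightarrow$(i) via Theorem \ref{2301080828} together with Fatou's lemma. Your only addition is making the uniformity in (ii) explicit by replacing $\|w\|^{1/p}$ with $\max(1,\|w\|)$, a detail the paper leaves implicit.
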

\begin{proof}Since $Af(x,r)\leq Mf(x)$ we have $I_{p,w}f(x)\leq\|w\|^{\frac{1}{p}}Mf(x)$. Thus
$\|I_{p,w}f\|_{\c{L}_q(X)}\leq\|w\|^{\frac{1}{p}}\|Mf\|_{\c{L}_q(X)}$. This shows (i)$\Rightarrow$(ii). By Theorem \ref{2301080828} and
Fatou's Lemma we have $\int_X(Mf)^qd\mu\leq\liminf_p\int_X(I_{p,w}f)^qd\mu$. This shows (iii)$\Rightarrow$(i). (ii)$\Rightarrow$(iii)
is trivial.\end{proof}
It is not hard to see that the statement of Theorem \ref{2301080828+1/2} is valid if the term `$(\c{L}_q(X),\c{L}_q(X))$-bounded'
is replaced by `$(\c{L}_q(X),\c{L}_q(X))$-weak-bounded'. In the case that $X=\bb{R}^n$, $d$ the standard Euclidean distance, and $\mu$
the $n$-dimensional Lebesgue-measure, it is well-known that $M$ is $(\c{L}_q(\bb{R}^n),\c{L}_q(\bb{R}^n))$-bounded for $1<q\leq\infty$ and also
$(\c{L}_1(\bb{R}^n),\c{L}_1(\bb{R}^n))$-weak-bounded (\cite{Stein1}). Thus the latter statement is valid with $M$ replaced by $I_{p,w}$.
We will see from Theorem \ref{2212310710} that $I_{1,w}$ is also  $(\c{L}_1(\bb{R}^n),\c{L}_1(\bb{R}^n))$-bounded.
The proof of the next result follows from the definition of $I_{p,w}$, and is omitted.
\begin{theorem} In the case that $X=\bb{R}^n$, for any nonnegative Schwartz test-function $f$ on $\bb{R}^n$ and every $p\in[1,\infty)$,
$I_{p,w}f$ is continuously $[p]$ times differentiable, where $[p]$ denotes the greatest integer $\leq p$.\end{theorem}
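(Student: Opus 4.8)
The plan is to write $I_{p,w}f=\Phi^{1/p}$, where
$\Phi(x):=\int_0^\infty w(r)(Af(x,r))^p\,dr=(I_{p,w}f(x))^p$, and to prove first that $\Phi\in C^{[p]}(\bb{R}^n)$ and then that applying the root $t\mapsto t^{1/p}$ costs no smoothness. One may assume $f\not\equiv 0$, the case $f\equiv 0$ giving $I_{p,w}f\equiv 0$. The first step is to record that for fixed $r>0$ the average is smooth in $x$: after the substitution $z=y-x$ one has $Af(x,r)=\frac{1}{c_nr^n}\int_{|z|<r}f(x+z)\,dz$ (with $c_n=\mu(\c{B}_{0,1})$ and $|f|=f$), so differentiating under the integral—legitimate since every derivative of the Schwartz function $f$ is bounded and the domain has finite measure—yields $\partial_x^\alpha Af(x,r)=A(\partial^\alpha f)(x,r)$ for all multi-indices $\alpha$. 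Thus $g:=Af(\cdot,r)\in C^\infty$ and, crucially, satisfies the bounds $0\le g\le\|f\|_\infty$ and $|\partial^\alpha g|\le\|\partial^\alpha f\|_\infty$ with constants independent of both $x$ and $r$.

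Next I would treat the inner power. The function $t\mapsto t^p$ on $[0,\infty)$ extends to some $\psi\in C^{[p]}(\bb{R})$ (extend by $0$ on $(-\infty,0)$ when $p\notin\bb{N}$, and by the polynomial $t^p$ when $p\in\bb{N}$); its derivatives $\psi^{(j)}(t)=p(p-1)\cdots(p-j+1)t^{p-j}$ for $0\le j\le[p]$ are continuous precisely because $p-j\ge 0$ exactly when $j\le[p]$. Since $g\ge 0$ we have $g^p=\psi\circ g$, which, as the composition of a $C^{[p]}$ function with a $C^\infty$ function, lies in $C^{[p]}(\bb{R}^n)$. Faà di Bruno's formula then expresses $\partial^\alpha(g^p)$, for $|\alpha|\le[p]$, as a finite sum of terms $c\,\psi^{(j)}(g)\prod_{i=1}^{j}\partial^{\beta_i}g$ with $\beta_1+\cdots+\beta_j=\alpha$ and $1\le j\le|\alpha|\le[p]$; using $g\le\|f\|_\infty$, $p-j\ge 0$, and the uniform bounds on $\partial^\beta g$, each such term—hence $\partial^\alpha(g^p)$—is dominated by a constant $K_\alpha$ depending only on $p$ and finitely many sup-norms of derivatives of $f$, and in particular \emph{independent of $r$}. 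This $r$-uniformity is exactly where the threshold $[p]$ appears: for $|\alpha|=[p]+1$ the term with $j=|\alpha|$ carries the factor $g^{p-|\alpha|}$ of negative exponent, which is unbounded near zeros of $g$, and the argument breaks.

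Finally I would differentiate $\Phi$ under the integral sign. For each $|\alpha|\le[p]$ the integrand $w(r)\,\partial^\alpha[(Af(x,r))^p]$ is continuous in $x$, measurable in $r$, and dominated by $K_\alpha\,w(r)\in\c{L}_1(0,\infty)$ uniformly for $x$ near a given point, so the standard differentiation-under-the-integral theorem, applied inductively on $|\alpha|$, gives $\partial^\alpha\Phi(x)=\int_0^\infty w(r)\,\partial^\alpha[(Af(x,r))^p]\,dr$, and dominated convergence yields continuity of $\partial^\alpha\Phi$; hence $\Phi\in C^{[p]}(\bb{R}^n)$. To pass to $I_{p,w}f=\Phi^{1/p}$ I would note that $\Phi>0$ everywhere: since $f\ge 0$ is continuous and not identically zero, for every $x$ there is $R$ with $\c{B}_{x,R}\cap\{f>0\}\ne\emptyset$, so $Af(x,r)>0$ for $r>R$ and $\Phi(x)\ge\int_R^\infty w(r)(Af(x,r))^p\,dr>0$ because $w>0$ a.e. Near any $x_0$ one then has $\Phi>\Phi(x_0)/2>0$, and since $t\mapsto t^{1/p}$ is $C^\infty$ on $(0,\infty)$, $\Phi^{1/p}$ is $C^{[p]}$ there; as $x_0$ is arbitrary, $I_{p,w}f\in C^{[p]}(\bb{R}^n)$. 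The main obstacle is securing the $r$-uniform domination of the $[p]$-th derivatives of $(Af)^p$ in the second step; once that is in hand, the differentiation under the integral and the harmlessness of the outer root are routine.
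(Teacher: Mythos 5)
The paper gives no proof for this theorem (it is declared to ``follow from the definition'' and omitted), and your argument is correct and supplies exactly the missing details along the natural route: the uniform-in-$r$ bound on $\partial^\alpha\big[(Af(\cdot,r))^p\big]$ for $|\alpha|\le[p]$, obtained by composing the $C^{[p]}$ extension of $t\mapsto t^p$ with the smooth, derivative-bounded averages via Fa\`a di Bruno, then differentiation under the integral to get $\Phi=(I_{p,w}f)^p\in C^{[p]}$, and strict positivity of $\Phi$ (from $f\not\equiv0$, $f\ge0$ and $w>0$ a.e.) so that the outer $p$-th root costs no smoothness. One cosmetic remark: since the paper defines $A$ with $|f|$, your identity $\partial^\alpha Af(x,r)=A(\partial^\alpha f)(x,r)$ should be read as the average of $\partial^\alpha f$ itself (without absolute value) over $\c{B}_{x,r}$; as you only use the bound $\|\partial^\alpha f\|_\infty$, which this average satisfies, nothing in the argument is affected.
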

We will see from Theorem \ref{2301150935} that for any $p\in[1,\infty)$ and every $f\in\c{L}_p(\bb{R}^n)$, the function $I_{p,w}f$ is
almost every-where continuous.
\begin{remark}It is clear that the above formalism of `replacing supremum-norm by $\c{L}_p$-norm on parameter-space'
may be applied to almost all maximal sublinear operators of any kind. One can also work in an abstract framework as in \cite{Sadr1}.
In this note we only consider the formalism for centered-ball Hardy-Littlewood maximal-function operators.
\end{remark}
\section{$I_{p,w}$ on Lie groups (I)}\label{2212290827}
With the notations $X,d,\mu,w$ as in $\S$\ref{2212290826}, suppose that $X=G$ is a non-compact Lie group and suppose that $d$ and $\mu=\lambda$
denote the distance function and the measure canonically induced by a left-invariant
Riemannian  metric on $G$. Thus $d$ is a left-invariant metric and $\lambda$ is a left Haar-measure on $G$.
The space $\c{F}_{loc}(G)$ coincides with the vector space of locally integrable functions on $G$.
By Lemma \ref{2212240755}, we know that for any $f\in\c{F}_{loc}(G)$ the function $Af:G\times(0,\infty)\to[0,\infty)$ is continuous.
\begin{lemma}\label{2101302224}For any $f\in\c{F}_{loc}(G),r\in(0,\infty),p\in[1,\infty)$ we have
\begin{equation}\label{2212300851}
\|Af(\cdot,r)\|_{\c{L}_p(G)}\leq\Big(\frac{1}{\lambda(\c{B}_{e,r})}\int_{\c{B}_{e,r}}\Delta(y^{-1})d\lambda(y)\Big)^{\frac{1}{p}}\|f\|_{\c{L}_p(G)}.
\end{equation} Also we have $\|Af(\cdot,r)\|_{\c{L}_\infty(G)}\leq\|f\|_{\c{L}_\infty(G)}$.\end{lemma}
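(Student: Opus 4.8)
The plan is to exploit the left-invariance of both $d$ and $\lambda$ to reduce the averaging-function to a translation integral against a fixed probability measure, and then to combine Jensen's inequality with Tonelli's theorem and the modular function. First I would record the geometric reduction. Since $d$ is left-invariant, $\c{B}_{x,r}=x\c{B}_{e,r}$, and since $\lambda$ is a left Haar-measure, $\lambda(\c{B}_{x,r})=\lambda(\c{B}_{e,r})$ is independent of $x$; write $V_r:=\lambda(\c{B}_{e,r})$. Substituting $z=xy$ and using left-invariance of $\lambda$ gives
$$Af(x,r)=\frac{1}{V_r}\int_{\c{B}_{e,r}}|f(xy)|\,d\lambda(y).$$
The $\c{L}_\infty$ estimate is then immediate: the right-hand side is an average of $|f(xy)|$ against the probability measure $V_r^{-1}\lambda|_{\c{B}_{e,r}}$, hence bounded by $\|f\|_{\c{L}_\infty(G)}$ for almost every $x$.

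For the $\c{L}_p$ estimate the crucial choice is to use Jensen's inequality rather than Minkowski's integral inequality. Since $V_r^{-1}\lambda|_{\c{B}_{e,r}}$ is a probability measure and $t\mapsto t^p$ is convex,
$$\big(Af(x,r)\big)^p\leq\frac{1}{V_r}\int_{\c{B}_{e,r}}|f(xy)|^p\,d\lambda(y).$$
Integrating in $x$ and applying Tonelli's theorem to interchange the two integrals, I arrive at
$$\|Af(\cdot,r)\|_{\c{L}_p(G)}^p\leq\frac{1}{V_r}\int_{\c{B}_{e,r}}\Big(\int_G|f(xy)|^p\,d\lambda(x)\Big)\,d\lambda(y).$$
The inner integral is then evaluated by the modular function: the right translate of the left Haar-measure satisfies $\int_G h(xy)\,d\lambda(x)=\Delta(y^{-1})\int_G h(x)\,d\lambda(x)$, so taking $h=|f|^p$ makes the inner integral equal to $\Delta(y^{-1})\|f\|_{\c{L}_p(G)}^p$. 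Factoring this out and extracting the $p$-th root yields exactly (\ref{2212300851}).

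The main obstacle is essentially careful bookkeeping rather than any deep difficulty. One must commit to Jensen rather than Minkowski, since Minkowski's integral inequality would place the exponent $\frac{1}{p}$ inside the integral over $\c{B}_{e,r}$ and produce the wrong expression; the outer $p$-th root in (\ref{2212300851}) is precisely the signature of the Jensen argument. One must also pin down the normalization of the modular function so that the factor $\Delta(y^{-1})$ appears correctly under the integral. The interchange of integrals is justified by Tonelli's theorem once one observes that $(x,y)\mapsto|f(xy)|^p$ is nonnegative and measurable and that the integration in $y$ runs over the bounded set $\c{B}_{e,r}$, so that local integrability of $f$ suffices.
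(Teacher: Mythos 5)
Your proposal is correct and matches the paper's own proof in every essential respect: both use left-invariance to reduce to the ball $\c{B}_{e,r}$, apply Jensen's inequality with the normalized ball measure, interchange integrals by Tonelli, and evaluate the right-translation integral via $\Delta(y^{-1})$ (the only cosmetic difference is that you change variables before applying Jensen, while the paper applies Jensen first). Your side remarks on why Jensen rather than Minkowski, and on the justification of Tonelli, are accurate.
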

Here $\Delta$ denotes the modular function of $G$ (\cite{Folland1}), i.e. the unique mapping $\Delta:G\to(0,\infty)$ satisfying
$\lambda(Bx)=\Delta(x)\lambda(B)$ for every $x\in G$ and every Borel subset $B$ of $G$. Note that $\Delta$ is a continuous group-homomorphism.
Thus, it follows from the relatively-compactness of $\c{B}_{e,r}$ in $G$, that the integral in right-hand side of (\ref{2212300851}) is finite.
For unimodular groups ($\text{e.g.}$ abelian groups) $\Delta\equiv1$. Thus for unimodular $G$, (\ref{2212300851}) becomes
$$\|Af(\cdot,r)\|_{\c{L}_p(G)}\leq\|f\|_{\c{L}_p(G)}.$$
\begin{proof}Suppose that $f\geq0$. For $1\leq p<\infty$, by Jensen's inequality we have\begin{equation*}\begin{split}
\int_{G}(Af(x,r))^pd\lambda(x)&\leq\int_{G}\Big(\frac{1}{\lambda(\c{B}_{x,r})}\int_{\c{B}_{x,r}}f^p(y)d\lambda(y)\Big)d\lambda(x)\\
&=\int_{G}\Big(\frac{1}{\lambda(x\c{B}_{e,r})}\int_{\c{B}_{e,r}}f^p(xy)d\lambda(y)\Big)d\lambda(x)\\
&=\frac{1}{\lambda(\c{B}_{e,r})}\int_{\c{B}_{e,r}}\Big(\int_Gf^p(xy)d\lambda(x)\Big)d\lambda(y)\\
&=\frac{1}{\lambda(\c{B}_{e,r})}\int_{\c{B}_{e,r}}\Big(\Delta(y^{-1})\int_Gf^p(x)d\lambda(x)\Big)d\lambda(y)\\
&=\frac{\|f\|^p_{\c{L}_p(G)}}{\lambda(\c{B}_{e,r})}\int_{\c{B}_{e,r}}\Delta(y^{-1})d\lambda(y).\end{split}\end{equation*}
The case $p=\infty$ is trivial.\end{proof}
\begin{definition}With the above assumptions, the $G$-norm of any radius-weight $w$ is denoted by $\|w\|_G$ and is defined by
$$\|w\|_G:=\int_0^\infty\frac{w(r)}{\lambda(\c{B}_{e,r})}\Big(\int_{\c{B}_{e,r}}\Delta(y^{-1})d\lambda(y)\Big)dr.$$\end{definition}
If $G$ is unimodular then we have $\|w\|_G=\|w\|<\infty$. It is clear that for any $G$ there exist radius-weights
with finite $G$-norm. For instance:
$$w(r)=\Bigg\{\begin{array}{cc}
\frac{e^{-r^2}\lambda(\c{B}_{e,r})}{\int_{\c{B}_{e,r}}\Delta(y^{-1})d\lambda(y)}&\hspace{3mm}\text{if }
1<\frac{1}{\lambda(\c{B}_{e,r})}\int_{\c{B}_{e,r}}\Delta(y^{-1})d\lambda(y) \\
e^{-r^2}&\hspace{3mm}\text{otherwise} \\\end{array}$$
\begin{theorem}\label{2212310710}With assumptions of this section on $G$, suppose that $w$ is a radius-weight with finite $G$-norm.
Then we have $$\|I_{p,w}f\|_{\c{L}_q(G)}\leq\|w\|^{\frac{q-p}{qp}}\|w\|_G^{\frac{1}{q}}\|f\|_{\c{L}_q(G)},
\hspace{5mm}(f\in\c{F}_{loc}(G),1\leq p\leq q\leq\infty).$$\end{theorem}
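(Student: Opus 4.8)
The plan is to prove the inequality
$$\|I_{p,w}f\|_{\c{L}_q(G)}\leq\|w\|^{\frac{q-p}{qp}}\|w\|_G^{\frac{1}{q}}\|f\|_{\c{L}_q(G)}$$
for $1\leq p\leq q\leq\infty$ by combining Lemma \ref{2101302224} with two applications of H\"older's inequality, keeping careful track of the exponents so that the weight $w$ splits correctly into the two factors $\|w\|^{(q-p)/(qp)}$ and $\|w\|_G^{1/q}$. The key observation is that the exponent on $\|w\|$ is $\frac{1}{p}-\frac{1}{q}$, which is exactly the conjugate-exponent deficit that appears when one estimates an $\c{L}_p$-norm by an $\c{L}_q$-norm against a finite measure.

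First I would dispose of the extreme cases. When $p=q$ the claimed bound reads $\|I_{q,w}f\|_{\c{L}_q(G)}\leq\|w\|_G^{1/q}\|f\|_{\c{L}_q(G)}$; this follows directly by writing out $\|I_{q,w}f\|_{\c{L}_q(G)}^q=\int_G\int_0^\infty w(r)(Af(x,r))^q\,dr\,d\lambda(x)$, applying Tonelli to interchange the order of integration, and then inserting the estimate $\|Af(\cdot,r)\|_{\c{L}_q(G)}^q\leq\big(\tfrac{1}{\lambda(\c{B}_{e,r})}\int_{\c{B}_{e,r}}\Delta(y^{-1})d\lambda(y)\big)\|f\|_{\c{L}_q(G)}^q$ from Lemma \ref{2101302224}; the $r$-integral then reassembles exactly into $\|w\|_G\|f\|_{\c{L}_q(G)}^q$. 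The case $q=\infty$ (with $p<\infty$) uses instead the bound $I_{p,w}f(x)\leq\|w\|^{1/p}Mf(x)$ already recorded in the excerpt together with $\|Mf\|_{\c{L}_\infty}\leq\|f\|_{\c{L}_\infty}$, and one checks the exponent $\frac{q-p}{qp}$ degenerates to $\frac1p$ as $q\to\infty$ while $\|w\|_G^{1/q}\to1$, matching the claim.

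For the generic case $1\leq p<q<\infty$ the plan is to interpolate the $\c{L}_p(\hat w)$-norm in the $r$-variable between the $\c{L}_q(\hat w)$-norm and the total mass of $\hat w$. Concretely, for fixed $x$ apply H\"older in $r$ with conjugate exponents $\frac{q}{p}$ and $\frac{q}{q-p}$ to the integrand $w(r)(Af(x,r))^p=\big(w(r)^{p/q}(Af(x,r))^p\big)\cdot w(r)^{(q-p)/q}$, obtaining
$$I_{p,w}f(x)\leq\Big(\int_0^\infty w(r)(Af(x,r))^q\,dr\Big)^{1/q}\|w\|^{\frac{q-p}{qp}}.$$
This is the crux: it converts the $(p,w)$-integral-function into the $(q,w)$-integral-function at the cost of the factor $\|w\|^{(q-p)/(qp)}$. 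Now raise to the $q$-th power, integrate in $x$ over $G$, use Tonelli, and feed in the $\c{L}_q$-estimate from Lemma \ref{2101302224} precisely as in the $p=q$ case; the $r$-integral again collapses to $\|w\|_G\|f\|_{\c{L}_q(G)}^q$, and taking $q$-th roots yields the stated inequality.

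I expect the main obstacle to be purely bookkeeping rather than conceptual: one must verify that the two exponents produced by H\"older, namely $\frac{q-p}{qp}$ on $\|w\|$ and $\frac1q$ on $\|w\|_G$, are exactly those claimed, and that the nested integrals are all nonnegative and measurable so that Tonelli applies without fuss (here the continuity of $Af$ guaranteed by Lemma \ref{2212240755} and the finiteness of $\hat w$ are what make everything legitimate). A minor subtlety worth stating explicitly is the reduction to $f\geq0$, which is harmless since $Af$ and hence $I_{p,w}f$ depend only on $|f|$; and one should note that finiteness of $\|w\|_G$ is exactly the hypothesis that makes the right-hand side finite, so the bound is vacuously consistent when $\|f\|_{\c{L}_q(G)}=\infty$.
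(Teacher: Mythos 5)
Your proposal is correct and follows essentially the same route as the paper: the paper normalizes $\hat w$ to a probability measure and applies Jensen's inequality with the convex map $t\mapsto t^{q/p}$, which is precisely your H\"older step with exponents $\frac{q}{p}$ and $\frac{q}{q-p}$ in disguise, and then both arguments finish identically via Tonelli and Lemma \ref{2101302224}, with the same handling of the extreme cases $p=q$ and $q=\infty$.
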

(Note that, by convention, $\frac{\infty-p}{\infty p}=\frac{1}{p}$ and $\frac{1}{\infty}=0$.)
\begin{proof} By Jensen's Inequality and Lemma \ref{2101302224}, for $q\neq\infty$ we have
\begin{equation*}\begin{split}
\|I_{p,w}f\|^q_{\c{L}_q(G)}&=\int_G\Big(\int_0^\infty w(r)(Af(x,r))^pdr\Big)^{\frac{q}{p}}d\lambda(x)\\
&=\|w\|^{\frac{q}{p}}\int_G\Big(\int_0^\infty\frac{w(r)}{\|w\|}(Af(x,r))^pdr\Big)^{\frac{q}{p}}d\lambda(x)\\
&\leq\|w\|^{\frac{q}{p}}\int_G\int_0^\infty\frac{w(r)}{\|w\|}(Af(x,r))^qdrd\lambda(x)\\
&=\|w\|^{\frac{q-p}{p}}\int_0^\infty w(r)\Big(\int_G(Af(x,r))^qd\lambda(x)\Big)dr\\
&\leq\|w\|^{\frac{q-p}{p}}\int_0^\infty\frac{w(r)\|f\|^q_{\c{L}_q(G)}}{\lambda(\c{B}_{e,r})}\Big(\int_{\c{B}_{e,r}}\Delta(y^{-1})d\lambda(y)\Big)dr\\
&=\|w\|^{\frac{q-p}{p}}\|w\|_G\|f\|^q_{\c{L}_q(G)}.\end{split}\end{equation*}
For $q=\infty$ the desired inequality is easily concluded.\end{proof}
\begin{theorem}\label{2301150935}
With assumptions of this section on $G$, suppose that $w$ is an arbitrary radius-weight.
Let $f\in\c{L}_p(G)$ with $1\leq p<\infty$. Then for any $x\in G$ such that $f$ is essentially bounded on a neighborhood of $x$, $I_{p,w}f$ is continuous at $x$. In particular, $I_{p,w}f$ is continuous almost every-where.\end{theorem}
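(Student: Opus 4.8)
The plan is to establish sequential continuity. Fix $x\in G$ admitting a neighbourhood $U$ with $|f|\le C$ a.e.\ on $U$, take $x_n\to x$, and aim at $I_{p,w}f(x_n)\to I_{p,w}f(x)$. Since $1\le p<\infty$ and $I_{p,w}f(z)^p=\int_0^\infty w(r)(Af(z,r))^p\,dr$, it suffices (the $p$-th root being continuous) to prove that $\int_0^\infty w(r)(Af(x_n,r))^p\,dr\to\int_0^\infty w(r)(Af(x,r))^p\,dr$. By Lemma \ref{2212240755}, $Af$ is jointly continuous on $G\times(0,\infty)$, so $Af(x_n,r)\to Af(x,r)$ for every fixed $r$; the whole statement is therefore a dominated-convergence argument in the variable $r$, and the one thing I must supply is a $w$-integrable majorant of $r\mapsto(Af(x_n,r))^p$ valid for all large $n$.

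I would construct this majorant in two radius-regimes, the key structural input being that $d$ is left-invariant and $\lambda$ is a left Haar measure, so that $\c{B}_{z,r}=z\c{B}_{e,r}$ and $\lambda(\c{B}_{z,r})=\lambda(\c{B}_{e,r})=:V(r)$ is independent of the centre $z$. For $r\ge r_0$, Hölder's inequality gives the centre-free bound $Af(z,r)\le\|f\|_{\c{L}_p(G)}V(r)^{-1/p}\le\|f\|_{\c{L}_p(G)}V(r_0)^{-1/p}$ (as $V$ is increasing), valid for every $z$ with no local hypothesis. For $r<r_0$, I use the local bound: picking $r_0,\delta$ with $\c{B}_{x,r_0+\delta}\subseteq U$, the triangle inequality yields $\c{B}_{x_n,r}\subseteq U$ as soon as $d(x_n,x)<\delta$, so $Af(x_n,r)\le C$. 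Therefore $w(r)(Af(x_n,r))^p\le g(r)$ for all large $n$, where $g:=C^pw$ on $(0,r_0)$ and $g:=\|f\|_{\c{L}_p(G)}^pV(r_0)^{-1}w$ on $[r_0,\infty)$; as $\|w\|<\infty$ and $V(r_0)>0$, $g$ is integrable. Dominated convergence then gives $I_{p,w}f(x_n)^p\to I_{p,w}f(x)^p$, i.e.\ continuity at $x$.

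For the final assertion I would first note that $I_{p,w}f$ is lower semicontinuous everywhere: Fatou's lemma applied to $w(r)(Af(x_n,r))^p\to w(r)(Af(x,r))^p$ gives $\liminf_n I_{p,w}f(x_n)\ge I_{p,w}f(x)$. Almost-everywhere continuity thus reduces to upper semicontinuity at a.e.\ $x$. Splitting the integral at a small $\eta>0$, the part over $[\eta,\infty)$ converges for \emph{every} $x$: there $Af(x_n,r)\le C_0\,Af(x,2r)$ by local doubling of $V$ (legitimate since $\c{B}_{x_n,r}\subseteq\c{B}_{x,2r}$ for large $n$), and $r\mapsto Af(x,2r)$ is continuous, hence bounded on $[\eta,r_0]$, giving a $w$-integrable majorant there, while $[r_0,\infty)$ is covered by the centre-free bound above. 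What remains is to show that the small-radius tail $\int_0^\eta w(r)(Af(x_n,r))^p\,dr$ is uniformly small along the sequence as $\eta\to0$.

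This tail is the main obstacle, and it is precisely where essential boundedness was used above. The crude Jensen estimate produces a factor $V(r)^{-1}$ that is not $w$-integrable near $0$ for a general radius-weight, so a more careful argument is required. I would run it at a Lebesgue point $x$ of $|f|^p$ (hence a.e.\ point): rewriting the tail by Tonelli as $\int_{\c{B}_{x_n,\eta}}|f(y)|^p\big(\int_{d(x_n,y)}^{\eta}w(r)V(r)^{-1}\,dr\big)\,d\lambda(y)$ displays it as the local part of the potential $P(z):=\int_{\c{B}_{z,r_0}}|f(y)|^p\kappa(d(z,y))\,d\lambda(y)$, where $\kappa(s):=\int_s^{r_0}w(r)V(r)^{-1}\,dr$. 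Using the left-invariance $\lambda(\c{B}_{y,r})=V(r)$ one computes $\int_G P\,d\lambda=\|f\|_{\c{L}_p(G)}^p\int_0^{r_0}w(r)\,dr<\infty$, so $P$ is finite a.e.\ and its local part tends to $0$ as $\eta\to0$. Controlling this local part uniformly along $x_n\to x$ by a local maximal operator applied to $|f|^p$ (finite at the Lebesgue point) and then letting $\eta\to0$ should force the tail to $0$, completing the proof. I expect the verification of this uniform maximal bound to be the only genuinely delicate step.
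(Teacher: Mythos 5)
Your first two paragraphs are correct and prove the first assertion by essentially the paper's route: the paper fixes $\epsilon$, splits the $r$-integral over $(0,a)\cup[a,b]\cup(b,\infty)$, and uses exactly your three ingredients --- essential boundedness of $f$ near $x$ for small $r$, joint continuity of $Af$ on the compact middle range, and the decay $\lambda(\c{B}_{e,r})^{-1}\leq\lambda(\c{B}_{e,b})^{-1}<\epsilon$ for large $r$ (via Jensen, where you use H\"{o}lder plus monotonicity of $V$). Your dominated-convergence packaging with the majorant $C^pw$ on $(0,r_0)$ and $\|f\|^p_{\c{L}_p(G)}V(r_0)^{-1}w$ on $[r_0,\infty)$ is a clean equivalent, and there is nothing to object to there.

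The problem is the final assertion, and your instinct that it needs a separate argument is right: the paper's proof establishes only the first claim, and its ``in particular'' tacitly assumes that almost every point has a neighborhood on which $f$ is essentially bounded --- false in general, since an $\c{L}_p$ function can be essentially unbounded on every open set. But the step you defer --- uniform smallness of $\int_0^\eta w(r)(Af(x_n,r))^p\,dr$ along $x_n\to x$, at a.e.\ $x$ --- is not merely delicate: it is false for an arbitrary radius-weight, and so is the a.e.-continuity conclusion itself. Take $G=\bb{R}$, $p=1$, $w=\mathbf{1}_{(0,1]}+e^{-r}\mathbf{1}_{(1,\infty)}$, let $(q_k)_k$ enumerate the rationals, and set
$$f=\sum_{k}\frac{1}{k^2}\cdot\frac{1}{2\epsilon_k}\mathbf{1}_{[q_k-\epsilon_k,\,q_k+\epsilon_k]},\qquad \epsilon_k=e^{-k^4}.$$
Then $\|f\|_{\c{L}_1}=\sum_k k^{-2}<\infty$, and your own potential computation (which is correct: $\int_G P\,d\lambda=\|f\|^p_{\c{L}_p(G)}\int_0^{r_0}w\,dr$, no unimodularity needed), combined with the centre-free bound for $r\geq r_0$, shows $I_{1,w}f<\infty$ a.e. Yet $Af(q_k,r)\geq k^{-2}/(2r)$ for $\epsilon_k\leq r\leq 1$, so $I_{1,w}f(q_k)\geq\tfrac{1}{2}k^{-2}\log(1/\epsilon_k)=k^2/2$. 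Since every ball contains rationals of arbitrarily large index, $\sup_{\c{B}_{x,\delta}}I_{1,w}f=\infty$ for every $x$ and every $\delta>0$, so $I_{1,w}f$ is discontinuous at every point where it is finite, i.e.\ almost everywhere; tuning heights, widths and $w$ gives the same failure for every $p\in[1,\infty)$. The example also pinpoints why your maximal-function plan cannot close: recentering $\frac{1}{V(r)}\int_{\c{B}_{x_n,r}}|f|^p\,d\lambda$ at $x$ costs a factor $V(r+d(x_n,x))/V(r)$, which is unbounded as $r\to0$ for fixed $n$, and finiteness of the potential $P$ at the single point $x$ gives no control over the kernel $\kappa(d(z,\cdot))$ recentered at nearby $z$ --- precisely the mechanism the dense spikes exploit.

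So your proposal proves everything that is actually true here (and everything the paper's proof actually establishes); the last sentence of the theorem has to be either restricted --- e.g.\ to $f$ that are locally essentially bounded off a null set --- or rescued by a Dini-type hypothesis on $w$: if $\int_0^1 w(r)\lambda(\c{B}_{e,r})^{-1}dr<\infty$, then for $d(z,x)<\eta$ your Jensen bound gives $\int_0^\eta w(r)(Af(z,r))^p dr\leq\big(\int_{\c{B}_{x,2\eta}}|f|^pd\lambda\big)\int_0^\eta w(r)V(r)^{-1}dr$, which tends to $0$ uniformly as $\eta\to0$, and combined with your $[\eta,\infty)$ argument this yields continuity of $I_{p,w}f$ at \emph{every} point.
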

\begin{proof}Without lost of generality, suppose that $f\geq0$.
Let $\epsilon>0$ be arbitrary and fixed. Choose a positive real number $a$ such that $\int_0^{a}w(r)dr<\epsilon$ and such that
$M:=\r{ess}\sup f|_{\c{B}(x,3a)}<\infty$. Then, for any $y\in G$ with $d(x,y)<a$ we have
\begin{equation}\label{2301080948}\begin{split}
&\int_0^aw(r)\big|Af(x,r)-Af(y,r)\big|^pdr\\
=&\int_0^aw(r)\big|\frac{1}{\lambda(\c{B}_{e,r})}\int_{\c{B}_{e,r}}(f(xz)-f(yz))d\lambda(z)\big|^pdr\\
\leq&\int_0^aw(r)\big(\frac{1}{\lambda(\c{B}_{e,r})}\int_{\c{B}_{e,r}}|f(xz)-f(yz)|d\lambda(z)\big)^pdr\\
\leq&\int_0^aw(r)2^pM^pdr<2^pM^p\epsilon.\end{split}\end{equation}
Choose a positive real number $b$ such that $\frac{1}{\lambda(\c{B}_{e,b})}<\epsilon$. Then, by Jensen's Inequality, for any $y\in G$ we have
\begin{equation}\label{2301080949}\begin{split}
&\int_b^\infty w(r)\big|Af(x,r)-Af(y,r)\big|^pdr\\
\leq&\int_b^\infty w(r)\big(\frac{1}{\lambda(\c{B}_{e,r})}\int_{\c{B}_{e,r}}|f(xz)-f(yz)|d\lambda(z)\big)^pdr\\
\leq&\int_b^\infty\frac{w(r)}{\lambda(\c{B}_{e,r})}\int_{\c{B}_{e,r}}|f(xz)-f(yz)|^pd\lambda(z)dr\\
\leq&\int_b^\infty\frac{w(r)}{\lambda(\c{B}_{e,r})}\int_{G}|f(x\cdot)-f(y\cdot)|^pd\lambda dr\\
\leq&\int_b^\infty\frac{w(r)}{\lambda(\c{B}_{e,r})}2^p\|f\|_{\c{L}_p(G)}^pdr<2^p\|w\|\|f\|_{\c{L}_p(G)}^p\epsilon.\\\end{split}\end{equation}
Since $Af$ is continuous there exists $\delta>0$ such that for any $y\in\c{B}_{x,\delta}$:
$$|Af(x,r)-Af(y,r)|<\epsilon,\quad(a\leq r\leq b),$$ and hence
\begin{equation}\label{2301080950}\int_a^b w(r)\big|Af(x,r)-Af(y,r)\big|^pdr\leq\|w\|\epsilon^p.\end{equation}
If $d(x,y)<\min\{a,\delta\}$ then by (\ref{2301080948}),(\ref{2301080949}),(\ref{2301080950}) we have
\begin{equation*}\begin{split}\big|I_{p,w}f(x)-I_{p,w}f(y)\big|^p&\leq\int_0^\infty w(r)\big|Af(x,r)-Af(y,r)\big|^pdr\\
&\leq\big(2^pM^p\epsilon\big)+\big(\|w\|\epsilon^p\big)+\big(2^p\|w\|\|f\|_{\c{L}_p(G)}^p\epsilon\big).\end{split}\end{equation*}
The proof is complete.\end{proof}
\section{$I_{p,w}$ on Lie Groups (II)}\label{2212290828}
With the notations $X,d,\mu,w$ as in $\S$\ref{2212290826}, suppose that $X=G$ is a non-compact Lie group.
Consider two Riemannian metrics on $G$ such that one of them is left-invariant and another one is right-invariant, and such that
the two metrics coincide on Lie-algebra of $G$. Let $d$ denote the distance function on $G$ induced by the left-invariant metric and
let $\mu=\rho$ denote the measure on $G$ induced by the right-invariant metric. Thus $\rho$ is a right Haar-measure on $G$. If $\lambda$ as in
$\S$\ref{2212290827} denotes the measure induced by the left-invariant metric then we have $\lambda(B)=\rho(B^{-1})$ and
$\rho(xB)=\Delta(x^{-1})\rho(B)$  for every $x\in G$ and Borel subset $B$ of $G$.
\begin{lemma}\label{2301130700}For any $f\in\c{F}_{loc}(G),r\in(0,\infty),p\in[1,\infty]$ we have
\begin{equation*}\|Af(\cdot,r)\|_{\c{L}_p(G)}\leq\|f\|_{\c{L}_p(G)}.\end{equation*}.\end{lemma}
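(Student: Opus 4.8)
The plan is to adapt, almost verbatim, the proof of Lemma \ref{2101302224}, exploiting the crucial asymmetry of the present setting: the metric $d$ is left-invariant (so that $\c{B}_{x,r}=x\c{B}_{e,r}$), while the measure $\rho$ is right-invariant. The upshot will be that the modular-function factors that produced the nontrivial constant in Lemma \ref{2101302224} now cancel exactly, leaving the clean bound with constant $1$.

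First I would reduce to the case $f\geq0$ and treat $1\leq p<\infty$ (postponing $p=\infty$). Applying Jensen's inequality with the convex function $t\mapsto t^p$ against the normalized restriction of $\rho$ to $\c{B}_{x,r}$, and then integrating in $x$, gives
$$\int_G (Af(x,r))^pd\rho(x)\leq\int_G\frac{1}{\rho(\c{B}_{x,r})}\int_{\c{B}_{x,r}}f^p(y)d\rho(y)\,d\rho(x).$$
Next I would perform the substitution $y=xz$ in the inner integral. Here the transformation rule $\rho(xB)=\Delta(x^{-1})\rho(B)$ enters twice: it yields both $\rho(\c{B}_{x,r})=\Delta(x^{-1})\rho(\c{B}_{e,r})$ and $d\rho(xz)=\Delta(x^{-1})d\rho(z)$. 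The two factors $\Delta(x^{-1})$ cancel, so the integrand collapses to $\frac{1}{\rho(\c{B}_{e,r})}\int_{\c{B}_{e,r}}f^p(xz)d\rho(z)$. This cancellation is precisely the feature distinguishing the present situation from that of Lemma \ref{2101302224}, and I expect the main point of the argument to be simply getting these left/right transformation conventions for $\rho$ straight so that the modular factors cancel correctly.

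Once the integrand is in this form, Tonelli's theorem (all integrands being nonnegative) permits swapping the order of integration and integrating in $x$ first. Right-invariance of $\rho$ gives $\int_G f^p(xz)d\rho(x)=\|f\|_{\c{L}_p(G)}^p$ independently of $z$, so the remaining outer integral over $\c{B}_{e,r}$ against the weight $\frac{1}{\rho(\c{B}_{e,r})}$ reduces to $\|f\|_{\c{L}_p(G)}^p$. Taking $p$-th roots yields $\|Af(\cdot,r)\|_{\c{L}_p(G)}\leq\|f\|_{\c{L}_p(G)}$. Finally, the case $p=\infty$ is immediate, since $Af(x,r)$ is an average of $|f|$ over $\c{B}_{x,r}$ and hence never exceeds $\|f\|_{\c{L}_\infty(G)}$. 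No genuine obstacle is anticipated beyond the careful bookkeeping of the modular function noted above.
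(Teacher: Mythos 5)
Your proposal is correct and follows essentially the same route as the paper's own proof: Jensen's inequality against the normalized ball measure, then the substitution $y=xz$ using $\rho(xB)=\Delta(x^{-1})\rho(B)$ so that the two modular factors $\Delta(x^{-1})$ cancel, then Tonelli and right-invariance of $\rho$ to integrate in $x$ first, with $p=\infty$ trivial. You even isolate the same key point the paper's computation hinges on, namely the exact cancellation of the modular-function factors that produced the nontrivial constant in Lemma \ref{2101302224}.
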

\begin{proof}Suppose that $f\geq0$. For $1\leq p<\infty$, by Jensen's inequality we have\begin{equation*}\begin{split}
\int_{G}(Af(x,r))^pd\rho(x)&\leq\int_{G}\Big(\frac{1}{\rho(\c{B}_{x,r})}\int_{\c{B}_{x,r}}f^p(y)d\rho(y)\Big)d\rho(x)\\
&=\int_{G}\Big(\frac{1}{\Delta(x^{-1})\rho(\c{B}_{e,r})}\int_{x\c{B}_{e,r}}f^p(y)d\rho(y)\Big)d\rho(x)\\
&=\frac{1}{\rho(\c{B}_{e,r})}\int_G\int_{\c{B}_{e,r}}f^p(xy)d\rho(y)d\rho(x)\\
&=\frac{1}{\rho(\c{B}_{e,r})}\int_{\c{B}_{e,r}}\int_Gf^p(xy)d\rho(x)d\rho(y)\\
&=\frac{1}{\rho(\c{B}_{e,r})}\int_{\c{B}_{e,r}}\|f\|^p_{\c{L}_p(G)}d\rho(y)\\
&=\|f\|^p_{\c{L}_p(G)}.\end{split}\end{equation*}The case $p=\infty$ is trivial.\end{proof}
The proof of the following theorem is omitted. It is similar to the proof of Theorem \ref{2212310710} but uses Lemma \ref{2301130700}.
\begin{theorem}With assumptions of this section on $G$, suppose that $w$ is an arbitrary radius-weight.
We have $$\|I_{p,w}f\|_{\c{L}_q(G)}\leq\|w\|^{\frac{1}{p}}\|f\|_{\c{L}_q(G)},
\hspace{5mm}(f\in\c{F}_{loc}(G),1\leq p\leq q\leq\infty).$$\end{theorem}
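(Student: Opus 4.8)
The plan is to transcribe the proof of Theorem \ref{2212310710} almost verbatim, replacing the left Haar-measure $\lambda$ throughout by the right Haar-measure $\rho$ and invoking the sharper estimate of Lemma \ref{2301130700} where the proof of Theorem \ref{2212310710} used Lemma \ref{2101302224}. First I would observe that $I_{p,w}$ depends only on $|f|$, so there is no loss of generality in assuming $f\geq0$. I would then fix $q\in[1,\infty)$ and use the hypothesis $p\leq q$, which makes the exponent $q/p\geq1$, so that $t\mapsto t^{q/p}$ is convex; this is exactly what is needed to run Jensen's inequality against the probability measure $\hat{w}/\|w\|$ on $(0,\infty)$.

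Concretely, writing $\int_0^\infty w(r)(Af(x,r))^pdr=\|w\|\int_0^\infty\frac{w(r)}{\|w\|}(Af(x,r))^pdr$ and applying Jensen, Tonelli, and finally Lemma \ref{2301130700}, the chain of estimates reads
\begin{equation*}\begin{split}
\|I_{p,w}f\|^q_{\c{L}_q(G)}
&=\|w\|^{\frac{q}{p}}\int_G\Big(\int_0^\infty\frac{w(r)}{\|w\|}(Af(x,r))^pdr\Big)^{\frac{q}{p}}d\rho(x)\\
&\leq\|w\|^{\frac{q}{p}}\int_G\int_0^\infty\frac{w(r)}{\|w\|}(Af(x,r))^q\,dr\,d\rho(x)\\
&=\|w\|^{\frac{q-p}{p}}\int_0^\infty w(r)\Big(\int_G(Af(x,r))^qd\rho(x)\Big)dr\\
&\leq\|w\|^{\frac{q-p}{p}}\|f\|^q_{\c{L}_q(G)}\int_0^\infty w(r)dr=\|w\|^{\frac{q}{p}}\|f\|^q_{\c{L}_q(G)},
\end{split}\end{equation*}
where the first inequality is Jensen, the middle equality is Tonelli (the integrand being nonnegative), and the last inequality is Lemma \ref{2301130700} in the form $\int_G(Af(x,r))^qd\rho(x)=\|Af(\cdot,r)\|^q_{\c{L}_q(G)}\leq\|f\|^q_{\c{L}_q(G)}$. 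The final equality uses only the bookkeeping $\frac{q-p}{p}+1=\frac{q}{p}$. Taking $q$-th roots gives the claim for finite $q$. For $q=\infty$ the argument is even simpler: since $|f|\leq\|f\|_{\c{L}_\infty(G)}$ holds $\rho$-a.e., one has $Af(x,r)\leq\|f\|_{\c{L}_\infty(G)}$ for every $x$ and $r$, whence $I_{p,w}f(x)\leq\big(\int_0^\infty w(r)\|f\|^p_{\c{L}_\infty(G)}dr\big)^{1/p}=\|w\|^{1/p}\|f\|_{\c{L}_\infty(G)}$.

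The structural part of this proof is genuinely routine—a rearrangement of powers of $\|w\|$ around one application each of Jensen and Tonelli—so I expect no obstacle there. The real content, and the only place where the present (right-invariant) setting differs from Section \ref{2212290827}, sits inside Lemma \ref{2301130700}: it is the right-invariance of $\rho$ together with the relation $\rho(x\c{B}_{e,r})=\Delta(x^{-1})\rho(\c{B}_{e,r})$ that causes the modular factor to cancel exactly after the inner Jensen–Tonelli step, leaving the clean constant $1$ in place of the $G$-norm weight $\|w\|_G$ appearing in Theorem \ref{2212310710}. Since that lemma is already established, the adaptation above is essentially a matter of substituting $\rho$ for $\lambda$ and the bound $1$ for $\frac{1}{\lambda(\c{B}_{e,r})}\int_{\c{B}_{e,r}}\Delta(y^{-1})d\lambda(y)$, which is why the resulting constant simplifies to $\|w\|^{1/p}$.
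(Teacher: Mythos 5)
Your proof is correct and is exactly the argument the paper intends: the paper omits the proof, stating only that it is ``similar to the proof of Theorem \ref{2212310710} but uses Lemma \ref{2301130700},'' and your Jensen--Tonelli chain with $\rho$ in place of $\lambda$ and the constant $1$ in place of the $G$-norm factor is precisely that adaptation, with the $q=\infty$ case handled correctly (noting only that when $p=q=\infty$ the claim reduces trivially to $Mf\leq\|f\|_{\c{L}_\infty(G)}$ with constant $\|w\|^{1/\infty}=1$).
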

The statements of Theorem \ref{2301150935} remains valid with the new assumptions of this section on $G$.
The proof is also similar to the proof of Theorem \ref{2301150935}. The only thing that may need an explanation is the relevant modification
of (\ref{2301080949}): We have $\|f\|_{\c{L}_p(G)}^p=\Delta(x^{-1})\int_Gf^p(x\cdot)d\rho$. Thus if we get $y$ so close to $x$ such that
$\Delta(y)\leq2\Delta(x)$ then we have
\begin{equation*}\begin{split}\int_{G}|f(x\cdot)-f(y\cdot)|^pd\lambda dr&=\|f(x\cdot)-f(y\cdot)\|_{\c{L}_p(G)}^p\\
&\leq\Big(\|f(x\cdot)\|_{\c{L}_p(G)}+\|f(y\cdot)\|_{\c{L}_p(G)}\Big)^p\\
&=\Big(\Delta(x)^\frac{1}{p}+\Delta(y)^\frac{1}{p}\Big)^p\|f\|_{\c{L}_p(G)}^p\\
&\leq3^p\Delta(x)\|f\|_{\c{L}_p(G)}^p.\end{split}\end{equation*}
Hence we replace the last line of (\ref{2301080949}) by
$$\leq\int_b^\infty\frac{w(r)}{\lambda(\c{B}_{e,r})}3^p\Delta(x)\|f\|_{\c{L}_p(G)}^pdr<3^p\Delta(x)\|w\|\|f\|_{\c{L}_p(G)}^p\epsilon.$$

\end{document}